\newtheorem{theorem}{Theorem}
\newtheorem{lemma}[theorem]{Lemma}
\newtheorem{corollary}[theorem]{Corollary}
\theoremstyle{definition}
\theoremstyle{remark}
\newtheorem{remark}[theorem]{Remark}
\title{Strongly Involutive Self-Dual Polyhedra}
\thanks{$^1$Supported by  PAPIIT-UNAM under project IN109218.\\ $^2$Supported by CONACyT under 
project 166306 and  support from PAPIIT-UNAM under project IN112614.\\ $^3$Supported by CONACyT Grant 268597.\\
$^4$Supported by MATHAMSUD 18-MATH-01, Project FLaNASAGraTA and by PICS07848 CNRS}
\author[Bracho]{Javier Bracho$^1$}
\address{$^1$Instituto de Matem\'aticas, UNAM}
\email{jbracho@im.unam.mx}
\author[Montejano]{Luis Montejano$^2$}
\address{$^2$Instituto de Matem\'aticas, UNAM campus Juriquilla}
\email{luis@im.unam.mx}
\author[Pauli]{Eric Pauli$^3$}
\address{$^3$Instituto de Matem\'aticas, UNAM campus Juriquilla and Institut Montpelli\'erain Alexander Grothendieck, Universit\'e de Montpellier}
\email{eriicpc@gmail.com}
\author[Ram\'irez Alfons\'in]{Jorge Ram\'irez Alfons\'in$^4$}
\address{$^4$Institut Montpelli\'erain Alexander Grothendieck, Universit\'e de Montpellier and UMI2924 - Jean-Christophe Yoccoz, CNRS-IMPA}
\email{jorge.ramirez-alfonsin@umontpellier.fr}
\begin{document}

\begin{abstract}
A polyhedron is a graph $\mathcal{G}$ which is simple, planar and 3-connected. In this note, we classify the family of {\em strongly involutive} self-dual polyhedra. The latter is done by using a well-known result
due to Tutte characterizing 3-connected graphs. We also show that this special class of polyhedra self-duality behaves topologically as the antipodal mapping.  These self-dual polyhedra are related with several problems in convex and discrete geometry including the  V\'azsonyi problem.

\end{abstract}

\maketitle

\section{Introduction} \label{sec:intro}

A planar and 3-connected graph $\mathcal{G} = (V,E)$ can be drawn in essentially one way on the sphere or the plane. This fundamental fact is a result of the work of Withney \cite{whitney}. It tells us that we not only have the sets $V$ and $E$ defined, but that the set $F$ of faces is also determined, and furthermore the dual graph $\mathcal{G}^{*}$ is well defined. The dual graph $\mathcal{G}^{*}$ is the graph whose vertex set $V^{*}$ is the set of faces $F$ of $\mathcal{G}$, and two new vertices in $\mathcal{G}^{*}$ are connected by an edge if and only if the faces that define them are adjacent in $\mathcal{G}$.
\\

In this class of graphs, each face $f$ is determined by its \textit{boundary walk}, that is, a cyclically ordered sequence $(v_1, v_2, \cdots , v_k)$ consisting of the vertices (and the edges) that are in the closure of the region defining the face $f$ (see \cite{top}). In this sense we can say that $u$ \textit{incides} on $f$, if it is any of the elements $v_1, v_2, \cdots , v_k$ of the cycle defining the face $f$. We denote this situation simply by $u \in f$. From Steinitz's theorem (\cite{steinitz}) we know that it is the same to talk about polyhedra in the sense of convex polytopes and to talk about these graphs, so we will refer to them as \textit{polyhedra}. A \textit{polyhedron} is a graph $\mathcal{G}$ that is simple (without loops and multiple edges), planar and 3-connected. 
\\

A polyhedron $\mathcal{P}$ is said to be \textit{self-dual} if there exists an isomorphism of graphs $\tau: \mathcal{P} \rightarrow \mathcal{P}^{*}$. This isomorphism is called a \textit{duality isomorphism}. There may be several of these
duality isomorphisms and each of them is a bijection between vertices and faces of $ \mathcal{P} $, such that adjacent vertices correspond to adjacent faces. We are interested in such an isomorphism that satisfies two more properties:

\begin{enumerate}
\item For each pair $u,v$ of vertices, $u\in \tau (v)$ if and only if $v \in \tau (u)$.
\item For every vertex $v$, we have that $v \notin \tau (v)$.
\end{enumerate}

Such an isomorphism will be called a \textit{strong involution}.  If $\mathcal{P}$ is a self-dual polyhedron admitting a strong involution $\tau$, we will say that $\mathcal{P}$ is a \textit{strongly involutive polyhedron}.
\\

Strongly involutive self-dual polyhedra are very common, like for example wheels on $n$-cycles with $n$ odd and hyperwheels on $n$-cycles with $n$-even (see \cite{servatius}). In fact the relevance of strongly involutive self-dual polyhedra is partially related with the famous V\'azsonyi problem. Let $T$ be a finite set of points of diameter $h$ in Euclidean $n$-space. Characterize those sets $T$ for which the diameter is attained  a maximal number of times as a segment of length $h$ with both endpoints in $T$.  For $n=3$, Gr\"umbaum, Heppes and Straszewicz (independently) proved the following: Suppose $T$ 
has the property that the diameter is attained  a maximal number of times and denote by $V$ the intersection of the balls of radius $h$ 
centered at points of $T$. Hence, the vertex singular points of $V$ are exactly $T$ and the face structure of the singular points of the boundary of $V$ is a strongly involutive self-dual polyhedra. Indeed, this unusual 
connection between discrete and convex geometry attracted the attention of several mathematicians to this and other related problems. See, for example L\'ovasz \cite{L}, Kupitz {\em et al.}, 
\cite{vazsonyi},  Montejano and Roldán-Pensado \cite{luis y edgardo}, Montejano{ \em et al.} (\cite{reuleaux}) and the work of Bezdek {\em et al.} \cite{K}.  For more about the V\'azsonyi problem see \cite{MMO}.
\\

%

In order to have a good understanding of strongly involutive self-dual polyhedra, we will use a result due to Tutte \cite{tutte} establishing that every 3-connected graph is either a wheel (a cycle where every vertex is also connected with a central vertex $o$) or it can be obtained from a wheel by a finite sequence of two operations: adding an edge between any pair of vertices and splitting a given vertex $v$, with degree $\delta (v)\geq 4$, into two new adjacent vertices $v'$ and $v''$ in such a way that the new graph obtained is still 3-connected. \\

In the following section we briefly summarize the notions and notation in relation with the above Tutte's result restricted to the case of simple and planar graphs. In \cite{grunbaum}, Gr\"unbaum and Barnette used this idea for giving two proofs of Steinitz's Theorem.
In Section 3, we show our main result that classify the strongly involutive self-dual polyhedra. Finally, in Section 4, we give a geometric interpretation of strong involutions by proving that such a duality is topologically equivalent to the antipodal mapping on the sphere. 

\section{Tutte's Theorem for polyhedra.} \label{sec:tutte thm}

In this section we summarize the main ideas and terminology of a recursive classification of spherical polyhedra. These results are deduced from Tutte's work and the details can be found in \cite{E}. Let $\mathcal{G}$ be a polyhedron and $e=(uv)$ any edge of $\mathcal{G}$. We write $\mathcal{G}\backslash e$ for the graph obtained from $\mathcal{G}$ by deleting $e$. We write $\mathcal{G}/e$ for the graph obtained from $\mathcal{G}\backslash e$ by identifying its endpoints $u$ and $v$ in a single vertex $uv$. In the same way, given any subset $X$ of $V$, we write $\mathcal{G}\backslash X$ for the graph obtained from $\mathcal{G}$ by ommiting the elements of $X$ and any edge such that one of its endpoints is an element of $X$. We will say that $e=(uv)$ \textit{can be deleted} if $\mathcal{G}\backslash e$ is a polyhedron and we say that $e=(uv)$ \textit{can be contracted} if $\mathcal{G}/e$ is a polyhedron. We will say that $X$ is an $n-$\textit{cutting set} if it has $n$ vertices and $\mathcal{G}\backslash X$ is not connected.
\\

According to Tutte's terminology, we will say that an edge $e$ is \textit{essential} if neither $\mathcal{G}\backslash e$ nor $\mathcal{G}/e$ are polyhedra. In other words, $e$ is essential if it cannot be deleted and it cannot be contracted. 
 
\begin{theorem} \cite{E} The following statements are equivalent.
\begin{enumerate}
\item $\mathcal{G}$ is a wheel.
\item Every edge is essential.
\item Every edge is on a triangular face and has one of its endpoints of degree 3.
\end{enumerate}
\end{theorem}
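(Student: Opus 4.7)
The plan is to establish the cyclic chain of implications $(1) \Rightarrow (3) \Rightarrow (2) \Rightarrow (1)$. The two short implications $(1) \Rightarrow (3)$ and $(3) \Rightarrow (2)$ can be handled by direct local arguments on a single edge, while the substantive direction $(2) \Rightarrow (1)$ will be reduced to Tutte's recursive characterization of 3-connected graphs recalled in the introduction.

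For $(1) \Rightarrow (3)$ I would write a wheel $W_n$ explicitly as a hub $o$ joined to the vertices $v_1,\dots,v_n$ of an $n$-cycle and simply check the condition edge-by-edge: each spoke $ov_i$ lies on the triangular faces bounded by $\{o,v_{i-1},v_i\}$ and $\{o,v_i,v_{i+1}\}$, and each rim edge $v_iv_{i+1}$ lies on the triangular face $\{o,v_i,v_{i+1}\}$; in every case the rim vertex has degree exactly $3$. For $(3) \Rightarrow (2)$ I would fix an edge $e=(uv)$ and use the two hypotheses independently. A degree-$3$ endpoint, say $u$, becomes a degree-$2$ vertex in $\mathcal{G}\setminus e$, so its two remaining neighbors form a $2$-cut and $\mathcal{G}\setminus e$ fails to be $3$-connected; hence $e$ cannot be deleted. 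A triangular face containing $e$ produces a common neighbor $w$ of $u$ and $v$, so in $\mathcal{G}/e$ the edges $uw$ and $vw$ merge into a pair of parallel edges; hence $\mathcal{G}/e$ is not simple and $e$ cannot be contracted. Combining the two, every edge is essential.

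The main obstacle is $(2) \Rightarrow (1)$, and my plan is to argue contrapositively using Tutte's theorem. Assume $\mathcal{G}$ is not a wheel. Then, by Tutte, $\mathcal{G}$ is obtained from some wheel by a finite, non-empty sequence of moves, each of which is either the addition of an edge or the splitting of a vertex of degree at least $4$, with $3$-connectivity preserved at every stage. Let $e$ be the edge introduced by the final move of such a sequence. If that move is an edge addition, then $\mathcal{G}\setminus e$ is precisely the preceding polyhedron and $e$ is deletable; if instead it is a vertex split, then $\mathcal{G}/e$ is the preceding polyhedron and $e$ is contractible. Either way $e$ is not essential, contradicting $(2)$. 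The only delicate point is to verify that the inverses of the two Tutte moves are exactly edge deletion and edge contraction, which is immediate from the definitions; once this is granted, invoking Tutte's theorem as a black box closes the argument with no further combinatorial work.
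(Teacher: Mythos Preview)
The paper does not actually supply a proof of this theorem: it is stated with a citation to the thesis \cite{E} and then used as a black box. Your write-up is therefore not competing with a proof in the paper but supplying one, and the strategy you chose---verifying $(1)\Rightarrow(3)\Rightarrow(2)$ by direct local checks and then reducing $(2)\Rightarrow(1)$ to Tutte's wheel theorem as recalled in the introduction---is the natural one and is correct in outline.

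One point in the $(2)\Rightarrow(1)$ step should be tightened. Tutte's theorem, as quoted in the introduction, is a statement about $3$-connected graphs: it only guarantees that the predecessor in the build-up sequence is $3$-connected, not that it is a polyhedron. When you assert that $\mathcal{G}\setminus e$ (respectively $\mathcal{G}/e$) ``is precisely the preceding polyhedron'' and hence that $e$ is deletable (respectively contractible), you still owe the reader the verification that this predecessor is simple and planar. Both are immediate---edge deletion and edge contraction preserve planarity; deleting an edge of a simple graph keeps it simple; and in a Tutte vertex split the neighbours of the old vertex are partitioned between $v'$ and $v''$, so the endpoints of $e$ share no common neighbour and $\mathcal{G}/e$ acquires no multiple edge---but the sentence should be there. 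With that one line added, your proof is complete.
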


This result can be rephrased as follows.

\begin{remark} Every polyhedron is either a wheel or it can be obtained by a wheel by adding new \emph{diagonal edges} within faces of the polyhedron or its dual's. Equivalently: if a polyhedron is not a wheel there is always a not essential edge, this means, an edge we can delete or contract in order to obtain a new polyhedron with one fewer edge. 
\end{remark}

In this way we can \emph{reduce} any polyhedron by a finite sequence of this operations until we obtain a wheel. It happens that one can obtain different wheels from a given polyhedron by selecting different sequences of non essential edges. 

\section{Strongly involutive polyhedra.} \label{ssd}

Throughout this section, we let $\mathcal{P}=(V,E,F,\tau)$ be a strongly involutive self-dual polyhedron and $(ab)\in E$ any edge of $\mathcal{P}$. By definition $\tau (a)$ and $\tau (b)$ are adjacent faces of $\mathcal{P}$, thus there must be an edge $(xy)\in E$ such that $\tau (a)\cap \tau (b)=(xy)$ and condition (1) of strong involution implies that $\tau (x) \cap \tau (y)=(ab)$. We will write $\tau (ab)$ for the edge $(xy)$. We will say that $(ab)$ is a \textit{diameter} if and only if $a\in \tau (b)$ (and therefore $b\in \tau (a)$). 

\begin{lemma} If $(ab)$ and $(xy)$ are both diameters, then $\mathcal{P}$ is the tetrahedron $K_4$.
\end{lemma}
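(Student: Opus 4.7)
The plan is to show that the hypothesis forces each of the four faces $A=\tau(a)$, $B=\tau(b)$, $X=\tau(x)$, $Y=\tau(y)$ to be a triangle, and from this to read off that $\mathcal{P}=K_4$.

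First I would unpack the two diameter hypotheses together with conditions (1) and (2). Since $(ab)$ is a diameter we have $b\in A$ and $a\in B$, and since $\tau(a)\cap\tau(b)=(xy)$ both $x$ and $y$ belong to $A$ and to $B$ and are adjacent along $(xy)$ in both boundary cycles. Symmetrically, the diameter $(xy)$ together with $\tau(x)\cap\tau(y)=(ab)$ gives $\{a,b,y\}\subseteq X$ and $\{a,b,x\}\subseteq Y$, with $a,b$ adjacent along $(ab)$ in the boundary cycles of both. So each of the four faces already contains three of the four vertices $a,b,x,y$, exactly as in the tetrahedron.

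The key structural input I would then invoke is the following property of $3$-connected planar graphs: two distinct faces that share two or more vertices must in fact share the edge between two of them, so those two vertices are adjacent in both boundary cycles. Applying this to the pairs $A\cap X\supseteq\{b,y\}$, $A\cap Y\supseteq\{b,x\}$, $B\cap X\supseteq\{a,y\}$, $B\cap Y\supseteq\{a,x\}$, we conclude that the edges $(by),(bx),(ay),(ax)$ all belong to $E$, and that each such pair is adjacent in the two indicated face cycles. In particular, in the cyclic boundary of $A$ the vertex $b$ is adjacent to both $x$ and $y$, while $x$ and $y$ are already adjacent through $(xy)$; since every vertex of a cycle has exactly two cyclic neighbours, this pins the boundary of $A$ down to the triangle $(b,x,y)$. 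Symmetrically, $B$, $X$, $Y$ are triangles with the expected vertex sets.

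Consequently $\deg(a)=|A|=3$, and likewise $\deg(b)=\deg(x)=\deg(y)=3$. Tracing the duality $\tau$ around $a$, the three edges of the triangle $A$, namely $(xy),(by),(bx)$, are the shared edges with the faces $B$, $X$, $Y$ respectively, so the neighbours of $a$ are precisely $b,x,y$; by symmetry each of the four vertices has the other three as its neighbours. Hence the subgraph induced on $\{a,b,x,y\}$ is $K_4$ and already exhausts the degree of each of these vertices, and the connectedness of $\mathcal{P}$ forces $V(\mathcal{P})=\{a,b,x,y\}$, so $\mathcal{P}=K_4$. The step I expect to require the most care is supplying a clean justification of the ``two faces sharing two vertices share an edge'' fact within the paper's framework; once it is in hand, the rest is a short bookkeeping exercise with conditions (1), (2) and the duality $\tau$.
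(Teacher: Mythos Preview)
Your proposal is correct and follows essentially the same route as the paper: both use the fact that in a $3$-connected planar graph two distinct faces can share at most an edge to deduce from the incidences $\{b,y\}\subseteq\tau(a)\cap\tau(x)$, etc., that $(ax),(ay),(bx),(by)$ are all edges. The paper stops there and asserts $\mathcal{P}=K_4$ directly, whereas you carry out the extra bookkeeping (pinning each face to a triangle, reading off the degrees, and invoking connectedness) that actually rules out further vertices; so your argument is the same in spirit but more complete on the final step.
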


\begin{proof} From the hypoteses we deduce $a\in \tau (x)\cap \tau (y) \cap \tau (b)$ and $x\in \tau (a)\cap \tau (b) \cap \tau (y)$, then $\{a,x\}\subset \tau (y)\cap \tau (b)$ but from the 3-connectivity, the intersection of any two faces must be empty, a single vertex or a single edge, thus $(ax)$ is an edge, otherwise $\{a,x\}$ is a 2-cutting set. Analogously, $(bx)$ is an edge, otherwise $\{b,x\}$ is a 2-cutting set. In the same way, $(ya)$ and $(yb)$ are edges and thus $\mathcal{P}$ is $K_4$.
\end{proof}

\begin{lemma} If $(ab)$ is a diameter and $(xy)$ is not, then $\{a,b,x\}$ and $\{a,b,y\}$ are 3-cutting sets of $\mathcal{P}$.
\end{lemma}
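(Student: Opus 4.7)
\medskip
\noindent\textbf{Proof plan.} The plan is to exhibit a Jordan curve $\gamma$ on the sphere that passes through exactly the three vertices $a$, $b$, $x$ of $\mathcal{P}$, and then show that $y$ and some other vertex of $\mathcal{P}$ lie on opposite sides of $\gamma$, forcing $\{a,b,x\}$ to disconnect the graph.

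\medskip
\noindent\emph{Step 1: constructing $\gamma$.} Since $(ab)$ is a diameter, $a\in\tau(b)$ and $b\in\tau(a)$, and the adjacent faces $\tau(a),\tau(b)$ share the edge $(xy)$. Condition~(2) forces $a,b,x,y$ to be four distinct vertices: for instance, $a=x$ would give $(xy)\subset\tau(a)$, hence $a\in\tau(a)$, a contradiction. I would choose a simple arc $\alpha_{a}$ from $b$ to $x$ through the open disc $\tau(a)$ and a simple arc $\alpha_{b}$ from $x$ to $a$ through $\tau(b)$, and set
\[
\gamma\;:=\;(ab)\cup\alpha_{a}\cup\alpha_{b}.
\]
Because $a\notin\tau(a)$ and $b\notin\tau(b)$, the edge $(ab)$ is a boundary edge of neither face, so the three pieces of $\gamma$ meet only at $a,b,x$, and $\gamma$ is a simple closed curve whose intersection with $V$ is exactly $\{a,b,x\}$. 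Checking simplicity and the absence of other vertices of $\mathcal{P}$ on $\gamma$ is the main technical point.

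\medskip
\noindent\emph{Step 2: a witness across $\gamma$.} By the Jordan curve theorem $\gamma$ splits the sphere into two open discs $D_{1},D_{2}$; since $y\notin\gamma$, assume $y\in D_{1}$. The faces $\tau(x)$ and $\tau(y)$ contain $(ab)$ on their boundary but have interiors disjoint from $\gamma$, and they lie on opposite sides of $(ab)$, so one of them, call it $f$, lies in $D_{2}$. The boundary cycle of $f$ has length at least $3$ and contains $a,b$, so there is a third vertex $w$. Condition~(2) rules out $w\in\{x,y\}$ when $f$ equals $\tau(x)$ or $\tau(y)$ respectively; condition~(1) handles the cross case, since $y\in\tau(x)$ is equivalent to $x\in\tau(y)$, which would make $(xy)$ a diameter, contrary to hypothesis. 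Hence $w\notin\{a,b,x,y\}$ and $w\in D_{2}$.

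\medskip
\noindent\emph{Step 3: separation.} Any edge $e\ne(ab)$ of $\mathcal{P}$ meets $\gamma$ only at vertices in $\{a,b,x\}$: by planarity $e$ cannot cross $(ab)$ except possibly at a shared endpoint, and the arcs $\alpha_{a},\alpha_{b}$ lie inside face interiors, which contain no edges. Consequently every edge of $\mathcal{P}$ with both endpoints outside $\{a,b,x\}$ lies entirely in $D_{1}$ or entirely in $D_{2}$. A walk from $y$ to $w$ in $\mathcal{P}\setminus\{a,b,x\}$ would then be a connected curve in $D_{1}\cup D_{2}$ joining points in different components of the complement of $\gamma$, contradicting the Jordan curve theorem. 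Thus $\{a,b,x\}$ is a $3$-cutting set; the argument for $\{a,b,y\}$ is identical after swapping the roles of $x$ and $y$.
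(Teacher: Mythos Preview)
Your argument is correct and follows the same topological-separation idea as the paper, though the execution differs in a useful way. The paper glues the four closed faces $\tau(a),\tau(b),\tau(x),\tau(y)$ into two discs $\tau(x)\cup\tau(y)$ and $\tau(a)\cup\tau(b)$ meeting only at $\{a,b\}$, observes that their union has the homotopy type of a circle, and takes the two complementary regions $R_1,R_2$ with $x\in\overline{R_1}$, $y\in\overline{R_2}$; witnesses $u,w$ are then picked on $\partial(\tau(x)\cup\tau(y))$ in each region. Your version replaces this by an explicit Jordan curve $\gamma=(ab)\cup\alpha_a\cup\alpha_b$ through exactly $\{a,b,x\}$, and locates the witness on the boundary of whichever of $\tau(x),\tau(y)$ lies on the side opposite $y$. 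The explicit curve makes the verification that no edge of $\mathcal{P}\setminus\{a,b,x\}$ crosses the separator completely transparent, whereas the paper's formulation leaves the claim $(\tau(x)\cup\tau(y))\cap(\tau(a)\cup\tau(b))=\{a,b\}$ and the disconnection step somewhat implicit. Both arguments hinge on the same two facts: that $(ab)$ being a diameter puts $a,b$ on $\partial\tau(b),\partial\tau(a)$, and that $(xy)$ \emph{not} being a diameter keeps $x$ and $y$ off the relevant boundaries so that a genuine third witness exists.
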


\begin{proof} In view of the 3-connectivity and thus of Withney's Theorem, $\tau (a), \tau (b), \tau (x)$ and $\tau (y)$ are faces homeomorphic to disks and from the hypoteses we can deduce that $\tau (x) \cup \tau(y)$ and $\tau (a) \cup \tau (b)$ are also disks such that $(\tau (x) \cup \tau(y))\cap (\tau (a) \cup \tau (b))=\{a,b\}$, then we can observe that $(\tau (x) \cup \tau(y))\cup (\tau (a) \cup \tau (b))$ has the homotopy type of a circle and thus its complement consists of two regions $R_1$ and $R_2$.
Let's suppose $x\in \overline{R_1}$ and $y\in \overline{R_2}$. Let $u$ and $w$ be vertices in $\tau (x) \cup \tau (y) \setminus \{a,b\}$ such that $u\in \overline{R_1}$ and $w\in \overline{R_2}$. Then in $\mathcal{P}\backslash \{a,b,x\}$, $u$ and $y$ are disconnected and in $\mathcal{P}\backslash \{x,y,b\}$, $w$ and $x$ are disconnected.
\end{proof}

\begin{theorem} If $\mathcal{P}$ is not a wheel, then there exists an edge $e$ satisfying the three following conditions:

\begin{enumerate}
\item It is not on a triangular face.
\item It is not in a 3-cutting set.
\item It is not a diameter.
\end{enumerate}
\end{theorem}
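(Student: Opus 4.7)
The plan is to show that conditions (1) and (2) together are precisely the condition for $e$ to be contractible in the sense of Section~2, and that (3) then follows automatically from (2) via the two lemmas above. First, I claim that for any edge $e=(ab)$, the graph $\mathcal{P}/e$ is a polyhedron if and only if (1) and (2) hold. Simplicity of $\mathcal{P}/e$ is equivalent to $a$ and $b$ having no common neighbour, which is exactly (1). Moreover, any $2$-cutting set of $\mathcal{P}/e$ either descends to a $2$-cutting set of $\mathcal{P}$ (contradicting 3-connectivity) or contains the merged vertex, in which case it lifts to a $3$-cutting set of $\mathcal{P}$ of the form $\{a,b,x\}$; conversely, any such 3-cutting set of $\mathcal{P}$ produces a 2-cutting set of $\mathcal{P}/e$. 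So contractibility of $e$ is exactly the conjunction of (1) and (2).

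Next, (2) implies (3) under our hypotheses. Since $K_4=W_3$ is a wheel, the assumption forces $\mathcal{P}\neq K_4$. If an edge $e=(ab)$ satisfying (2) were a diameter, then by the first of the two lemmas above the edge $\tau(ab)=(xy)$ could not also be a diameter, and the second lemma would then exhibit $\{a,b,x\}$ as a 3-cutting set containing both endpoints of $e$, contradicting (2). The problem is thereby reduced to producing a contractible edge of $\mathcal{P}$.

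To this end, the Remark of Section~2 supplies, since $\mathcal{P}$ is not a wheel, a non-essential edge $e_0$, i.e., an edge that is either deletable or contractible in $\mathcal{P}$. In the contractible case, nothing more is needed. In the deletable case, $\mathcal{P}\backslash e_0$ is a polyhedron, and its planar dual is $\mathcal{P}^{*}/e_0^{*}$, so $e_0^{*}$ is contractible in $\mathcal{P}^{*}$. Because contractibility is a purely graph-theoretic property and $\tau\colon\mathcal{P}\to\mathcal{P}^{*}$ is a graph isomorphism, the edge $\tau^{-1}(e_0^{*})$ of $\mathcal{P}$ is also contractible, and thus satisfies (1), (2) and (3). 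The main obstacle I expect is making the equivalence \emph{``$e$ contractible iff $(1)$ and $(2)$''} fully rigorous, particularly the bookkeeping of how 2-cuts of the quotient $\mathcal{P}/e$ correspond bijectively to 3-cutting sets of $\mathcal{P}$ through $\{a,b\}$; once that is settled, invoking Tutte's reduction and transporting deletion to contraction through planar duality together with $\tau$ is routine.
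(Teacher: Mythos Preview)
Your argument is correct and is essentially the same as the paper's: reduce to (1)+(2) via Lemmas~3 and~4, then obtain a contractible edge from Tutte's theorem by transporting a deletable edge through planar duality and the isomorphism~$\tau$. One minor wrinkle in your write-up: ``no common neighbour'' is not literally the same as condition~(1), since a $3$-cycle $abc$ through $e$ need not bound a face; but in a $3$-connected planar graph such a non-facial triangle forces $\{a,b,c\}$ to be a $3$-cutting set, so under~(2) the two notions coincide and your equivalence ``contractible $\Leftrightarrow$ (1)$\wedge$(2)'' stands (and in fact you only need the direction ``contractible $\Rightarrow$ (1)$\wedge$(2)'', which is immediate).
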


\begin{proof} By the previous two lemmas, it is enough to show that there exists an edge not on a triangular face and not in a 3-cutting set. By combining Theorem 1 and the self-duality of $\mathcal{P}$ we ensure the existence of such an edge.
\end{proof}

\begin{theorem} Let $e=(ab)$ be an edge which is neither on a triangular face nor in a 3-cutting set nor
a diameter. Then, the graph $[\mathcal{P}/(ab)]\backslash \tau (ab)$, denoted by $\mathcal{P}^\diamond=\mathcal{P}^\diamond_{ab}$, is a strongly involutive self-dual polyhedron.
\end{theorem}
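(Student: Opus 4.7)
The plan is to verify first that $\mathcal{P}^\diamond$ is a polyhedron, then that it is self-dual, and finally that the natural duality $\tau^\diamond$ is a strong involution. The candidate duality is $\tau^\diamond(v)=\tau(v)$ for $v\in V\setminus\{a,b\}$ and $\tau^\diamond(ab)=\tau(a)\cup\tau(b)$, the merged face created when $(xy)$ is removed.

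Planarity is preserved by contraction and deletion. For simplicity, the one thing to check is that $a$ and $b$ have no common neighbour: otherwise a common neighbour $w$ gives a triangle $abw$ that, by $3$-connectivity, either bounds a face (forbidden by (1)) or separates the sphere, making $\{a,b,w\}$ a $3$-cutting set (forbidden by (2)). Three-connectedness splits into two stages. The easy stage shows $\mathcal{P}/(ab)$ is $3$-connected: any $2$-cut must involve the contracted vertex $ab$, because a $2$-cut not involving $ab$ would already disconnect $\mathcal{P}$; a $2$-cut of the form $\{ab,w\}$ corresponds to the $3$-cut $\{a,b,w\}$ of $\mathcal{P}$, excluded by (2).

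The main obstacle is the second stage, that deleting $(xy)$ from $\mathcal{P}/(ab)$ preserves $3$-connectedness. Hypothesis (3) (not a diameter) gives $a\notin\tau(b)$ and $b\notin\tau(a)$, so neither $a$ nor $b$ lies on $\partial\tau(a)$ or $\partial\tau(b)$, and each of these cycles minus the shared edge $(xy)$ is a path from $x$ to $y$ in $\mathcal{P}$ disjoint from $\{a,b\}$. Suppose $\{p,q\}$ is a $2$-cut of $\mathcal{P}^\diamond$; the sub-cases with $\{p,q\}\cap\{ab,x,y\}\neq\emptyset$ collapse onto a $3$-cut $\{a,b,w\}$ of $\mathcal{P}$, again forbidden by (2). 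In the remaining case each of the two paths from $x$ to $y$ along $\partial\tau(a)$ and $\partial\tau(b)$ must be broken by $\{p,q\}$, forcing one of $p,q$ to lie on $\partial\tau(a)\setminus\{x,y\}$ and the other on $\partial\tau(b)\setminus\{x,y\}$. A short planar argument around the edges $(ab)$ and $(xy)$ then produces a $3$-cut of the form $\{a,b,w\}$, once more contradicting (2).

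Self-duality is formal. The standard identities $(G/e)^{*}=G^{*}\setminus e^{*}$ and $(G\setminus e)^{*}=G^{*}/e^{*}$ give $(\mathcal{P}^\diamond)^{*}=\mathcal{P}^{*}/(\tau(a)\tau(b))\setminus(\tau(x)\tau(y))$; the graph isomorphism $\tau\colon\mathcal{P}\to\mathcal{P}^{*}$ sends $(ab)$ to $(\tau(a)\tau(b))$ and $(xy)$ to $(\tau(x)\tau(y))$, so it descends to the required isomorphism $\tau^\diamond\colon\mathcal{P}^\diamond\to(\mathcal{P}^\diamond)^{*}$. The two strong-involution conditions for $\tau^\diamond$ follow directly. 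Condition~(1) on pairs of old vertices is inherited from $\tau$, and for the pair $(ab,v)$ the equivalence $ab\in\tau(v)\iff a\in\tau(v)\ \text{or}\ b\in\tau(v)\iff v\in\tau(a)\cup\tau(b)=\tau^\diamond(ab)$ is immediate from condition~(1) for $\tau$. Condition~(2) for old vertices is inherited; for $ab$ itself, $ab\notin\tau^\diamond(ab)$ is exactly the conjunction of $a\notin\tau(a)$, $b\notin\tau(b)$ (condition~(2) for $\tau$) with $a\notin\tau(b)$, $b\notin\tau(a)$ (hypothesis~(3), that $(ab)$ is not a diameter).
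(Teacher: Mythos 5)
Your skeleton matches the paper's (same $\tau^\diamond$, simplicity of $\mathcal{P}/(ab)$ from hypotheses (1)--(2), deletion/contraction duality for self-duality), but the heart of the theorem --- $3$-connectivity of $\mathcal{P}^\diamond$ after deleting $(xy)$ --- has a genuine gap, and one of your intermediate claims is false. Since $\mathcal{P}/(ab)$ is $3$-connected, any $2$-cut $\{p,q\}$ of $\mathcal{P}^\diamond$ must avoid $x$ and $y$ and must split $\mathcal{P}^\diamond$ into exactly two components, one containing $x$ and the other $y$ (otherwise it would already be a $2$-cut of $\mathcal{P}/(ab)$). Consequently, in the sub-case $p=ab$, putting the edge $(xy)$ back and un-contracting shows that $\mathcal{P}\backslash\{a,b,q\}$ is connected: there is no $3$-cut $\{a,b,w\}$ of $\mathcal{P}$ here, so your appeal to hypothesis (2) gives no contradiction. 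This is precisely the sub-case where the non-diameter hypothesis (3) must do its work, as in the paper: the two cut vertices must lie one on $\partial\tau(a)\setminus\{x,y\}$ and one on $\partial\tau(b)\setminus\{x,y\}$, so $ab$ being one of them forces $a\in\tau(b)$ or $b\in\tau(a)$ (using $v\notin\tau(v)$), contradicting (3); equivalently, in your own language, $ab$ lies on neither of the two $x$--$y$ boundary paths, so $q$ alone would have to hit both internally, which is impossible. (Also note that $a\notin\tau(a)$, $b\notin\tau(b)$ comes from condition (2) of strong involution, not from hypothesis (3).)

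In the remaining case ($\{p,q\}$ disjoint from $\{ab,x,y\}$), the promised ``short planar argument around $(ab)$ and $(xy)$ producing a $3$-cut $\{a,b,w\}$'' is never supplied, and I do not see one inside $\mathcal{P}$ alone: such a $\{p,q\}$ is a $2$-cut of $\mathcal{P}\backslash(xy)=\mathcal{P}\backslash\tau(ab)$, and the reason this cannot happen is that $\mathcal{P}\backslash\tau(ab)$ is isomorphic via $\tau$ to $(\mathcal{P}/(ab))^{*}$, the dual of a polyhedron, hence $3$-connected --- exactly the step the paper makes explicitly (``therefore its dual $\mathcal{P}\backslash\tau(ab)$ is also a polyhedron''), and one that is available to you from the duality identities you state later but never invoke for connectivity. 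So the conclusion is correct and all the ingredients are implicitly on your page, but the case analysis misassigns them: the case that needs hypothesis (3) is dismissed by a false appeal to (2), and the case that needs the self-duality transfer is dismissed by an unspecified argument.
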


\begin{proof} Since $(ab)$ satisfies the three properties of last theorem, then $\mathcal{P}/(ab)$ is a polyhedron, and therefore its dual $\mathcal{P}\backslash \tau (ab)$ is also a polyhedron. We will show that $\mathcal{P}^\diamond$ is a polyhedron. Indeed, it is simple and planar. We need it to be 3-connected. If it were not, then it would have a 2-cutting set $\{m,n\}$. Since $\tau (a)$ and $\tau (b)$ are the faces such that $\tau (a) \cap \tau (b)= \tau (ab)$ we may observe that one of the elements in $\{m,n\}$ is in $\tau (a)$ and the other is in $\tau (b)$. Let's supose $m\in \tau (a)$ and $n\in \tau (b)$. Furthermore the vertex $a=b$, denoted by $ab$ must be one of the elements in $\{m,n\}$, otherwise $\{m,n\}$ would be a 2-cutting set of $\mathcal{P}\backslash \tau (ab)$, a contradiction. This implies that in $\mathcal{P}$, $a\in \tau (b)$ (and therefore $b\in \tau (a)$), so $(ab)$ would be a diameter, which is not by hypothesis. Finally, by definition, $\mathcal{P}^\diamond$ is self-dual and it is strongly involutive with isomorphism $\tau^\diamond (u)=\tau(u)$ for every $u\notin\{a,b\}$ and with $\tau^\diamond (a=b)$ the face obtained by the union of $\tau(a)$ and $\tau(b)$ when edge $(xy)$ is deleted.\end{proof}

By the above theorem, we can define the \emph{remove-contract} operation in any strongly involutive polyhedron not a wheel: there is at least one edge $(ab)$ that we can contract and at the same time remove the edge $\tau (ab)$ in order to obtain a new strongly involutive polyhedron. We can apply this operation repeatedly in order to finish with a strongly involutive wheel (with odd number of vertices in the main cycle). Conversely, we can start with such a wheel and then diagonalizing faces and splitting their corresponding vertices carefully in order to \emph{expand} a strongly involutive polyhedron. By diagonalizing we mean that given a face that is not a triangle, we add a new edge within the face joining two non-consecutive vertices. 

In the above terms, Theorem 6 gives the following. 

\begin{corollary} Every strongly involutive self-dual polyhedra is either a wheel or it can be obtained from an odd wheel by a finite sequence of operations consisting in diagonalizing faces of the polyhedron and its dual's simultaneously. \end{corollary}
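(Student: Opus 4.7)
The plan is to induct on the number of edges of $\mathcal{P}$, using Theorem 6 as the reduction step and reading it backwards for the expansion. If $\mathcal{P}$ is a wheel, we are in the base case; otherwise Theorem 5 supplies an edge $(ab)$ that is non-triangular, not in a $3$-cutting set, and not a diameter, and Theorem 6 then yields the smaller strongly involutive self-dual polyhedron $\mathcal{P}^\diamond = [\mathcal{P}/(ab)]\setminus\tau(ab)$ to which the induction applies.

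For the base case I would verify that any strongly involutive self-dual wheel has an odd rim. For $W_n$ with $n\geq 4$, a degree count forces any duality isomorphism $\tau$ to send the hub to the outer face, thereby inducing a bijection between the $n$ rim vertices and the $n$ triangular faces; since this bijection must preserve cyclic adjacency, it is either a shift $i\mapsto i+c$ or a reflection $i\mapsto -i+c$ of the cyclic index set $\{1,\dots,n\}$. Combining condition (2) of strong involutivity ($v\notin\tau(v)$) with the symmetry condition (1) reduces to a short arithmetic exercise modulo $n$, whose only solution is the shift by $(n-1)/2$ with $n$ odd. The remaining case $n=3$ is $K_4$, which is strongly involutive by Lemma 3.

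For the inductive step, once $\mathcal{P}^\diamond$ has been produced by Theorem 6 the inductive hypothesis tells us that $\mathcal{P}^\diamond$ is either an odd wheel or is obtained from one by a finite sequence of simultaneous face-diagonalizations in the polyhedron and its dual. It remains only to recognize that $\mathcal{P}$ itself is recovered from $\mathcal{P}^\diamond$ by one further such operation, namely the inverse of the remove-contract step. That inverse simultaneously re-inserts the edge $\tau(ab)$ as a diagonal in the face of $\mathcal{P}^\diamond$ formed by merging $\tau(a)$ and $\tau(b)$, and splits the contracted vertex $ab$ back into $a$ and $b$ joined by the edge $(ab)$. Under planar-graph duality, splitting a vertex of $\mathcal{P}^\diamond$ is precisely the operation of adding a diagonal edge in the corresponding face of $(\mathcal{P}^\diamond)^*$, and via the self-duality $\tau^\diamond$ this becomes a diagonalization of a face of $\mathcal{P}^\diamond$ itself; hence the inverse of remove-contract is exactly the simultaneous diagonalization required by the statement.

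The main obstacle, beyond the routine induction, is the base-case arithmetic that singles out odd wheels, since the paper only states this fact without proof in the introduction. Once that is in hand, the inductive step is a clean translation of Theorem 6 read in reverse under the identification of \emph{split-a-vertex} in $\mathcal{P}^\diamond$ with \emph{add-a-diagonal} in a face of $(\mathcal{P}^\diamond)^*$.
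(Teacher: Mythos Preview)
Your argument is correct and follows exactly the route the paper takes: iterate the remove--contract operation of Theorem~6 (guaranteed applicable by Theorem~5) until a wheel is reached, then read the process backwards as simultaneous diagonalizations. You supply the one detail the paper leaves implicit, namely the verification that a strongly involutive wheel must have an odd rim; otherwise the two proofs coincide.
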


\section{Topological interpretation.} \label{top int}

In this section we are going to consider topological embeddings of a given graph $\mathcal{G}$ on the surface $\mathbb{S}^2$. By Whitney's Theorem we know that if $\mathcal{G}$ is simple, planar and 3-connected, then any two such embeddings are equivalent in the sense that the set of faces (and their adjacencies) is fully determined just by the embedding of the graph. It is an interesting fact that with these conditions we can choose one of these embeddings in such a way that any automorphism of the graph of $\mathcal{P}$ acts as an isometry of $\mathbb{S}^2$. We will write this important fact as follows.

\begin{lemma}\cite[Lemma 1]{servatius} \textbf{(Isometric embedding lemma.)} There exists an embedding $i:\mathcal{G}\to \mathbb{S}^2$ such that for every $\sigma \in \mbox{Aut}(\mathcal{G})$ there exists an isometry $\tilde{\sigma}$ of $\mathbb{S}^2$ satisfying $i\circ \tilde{\sigma}=\sigma \circ i$. \end{lemma}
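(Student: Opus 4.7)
The plan is to produce a geometric realization of $\mathcal{G}$ on $\mathbb{S}^2$ which is canonical enough to force every combinatorial symmetry to act as a Euclidean isometry. By Steinitz's theorem $\mathcal{G}$ is already the $1$-skeleton of some convex polytope $P\subset \R^3$, but a generic such $P$ need not display $\mbox{Aut}(\mathcal{G})$ as its group of rigid motions, so the first task is to refine the choice of $P$ until its Euclidean symmetry group coincides with $\mbox{Aut}(\mathcal{G})$.

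The cleanest input is the Koebe-Andreev-Thurston midsphere theorem: every polyhedron $\mathcal{G}$ can be realized as a convex polytope $P\subset \R^3$ all of whose edges are tangent to the unit sphere $\mathbb{S}^2$, and the realization is unique up to M\"obius transformations of $\mathbb{S}^2$. First I would fix such a $P$ and normalize it (for instance by placing the conformal centroid of the tangency points at the origin) so that $P$ is determined by $\mathcal{G}$ up to an element of $O(3)$. Alternatively, one may invoke Mani's theorem, which directly asserts the existence of a convex polytope realization of $\mathcal{G}$ whose Euclidean symmetry group is isomorphic to $\mbox{Aut}(\mathcal{G})$; either of these two inputs suffices.

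Given this normalized $P$, I would define $i:\mathcal{G}\to \mathbb{S}^2$ by mapping each vertex to the radial projection of the corresponding vertex of $P$ onto $\mathbb{S}^2$, and each edge to the geodesic arc joining its endpoints. For any $\sigma\in \mbox{Aut}(\mathcal{G})$ the relabelled polytope $\sigma(P)$ is again a normalized midsphere realization of $\mathcal{G}$, so by the uniqueness statement it differs from $P$ by some $\tilde{\sigma}\in O(3)$. Since $\tilde{\sigma}$ permutes the vertices of $P$ exactly as $\sigma$ permutes the vertices of $\mathcal{G}$, and orthogonal maps commute with radial projection, the desired equivariance $i\circ \tilde{\sigma}=\sigma \circ i$ follows at once, and $\tilde{\sigma}$ preserves $\mathbb{S}^2$ setwise, hence restricts to an isometry.

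The only non-routine step is the uniqueness-up-to-isometry of the canonical sphere realization; this is essentially the full content of the lemma, which is precisely why it is imported from \cite{servatius} rather than reproved here. The main obstacle is therefore the midsphere (or Mani) theorem itself; once that is in hand, the equivariance of the radial projection is a purely formal consequence of uniqueness.
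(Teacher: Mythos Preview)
The paper does not give a proof of this lemma at all: it is stated with the citation \cite[Lemma 1]{servatius} and used as a black box, exactly as you yourself observe in your final paragraph. There is therefore nothing in the paper to compare your argument against.

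Your sketch is nonetheless a correct and standard route to the result. Either input you name suffices: Mani's theorem gives directly a convex realization whose Euclidean symmetry group equals $\mbox{Aut}(\mathcal{G})$, while the midsphere theorem plus a centroid normalization yields the same uniqueness-up-to-$O(3)$ conclusion. Once that is in hand, the equivariance of the spherical embedding is, as you say, formal. One small cosmetic point: radial projection of a straight edge of $P$ is not in general a great-circle arc, so if you want the edges to be geodesics you are genuinely redefining the embedding on edges rather than just projecting; this is harmless because elements of $O(3)$ send geodesic arcs to geodesic arcs and permute the projected vertices correctly, so the equivariance persists.
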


Our goal for now is to interpretate geometrically the strong involutions. In the rest of the section $\mathcal{G}$ is the underlying graph (simple, planar and 3-connected) of a strongly involutive self-dual polyhedron $\mathcal{P}$. 
\\
Let us define $\mathcal{G}_{\square}$ the \emph{graph of squares of $\mathcal{G}$} as follows: 
$$\begin{array}{ll}
V(\mathcal{G}_{\square}) & = V(\mathcal{G}) \cup F(\mathcal{G}) \cup A(\mathcal{G}) \text{ and}\\
E(\mathcal{G}_{\square}) & =  \{(ve):v\in V(\mathcal{G}), e\in E(\mathcal{G}), v\in e\} \cup \\
& \ \ \ \{(ec): e\in E(\mathcal{G}), f\in F(\mathcal{G}), e\in f\}.
 \end{array}$$
 
It is easy to observe that $\mathcal{G}_{\square}$ is a 3-connected simple planar graph and therefore it can be drawn on the sphere in such a way that any automorphism of $\mathcal{G}_{\square}$ is an isometry. We can suppose $\mathcal{G}_{\square}$ is embedded in that way and we will abuse of notation making no distintion between $\mathcal{G}_{\square}$ and its image under the embedding. By definition, the faces of $\mathcal{G}_{\square}$ are all quadrilaterals of the form $(vafb)$, where $v\in V(\mathcal{G}), a,b\in E(\mathcal{G})$ and $f\in F(\mathcal{G})$.  

\begin{theorem} Let $\tau$ be a strong involution of $\mathcal{P}$. Then $\tilde{\tau}$ is the antipodal mapping 
$\alpha :\mathbb{S}^2 \rightarrow \mathbb{S}^2$, $\alpha (x)=-x$.
\end{theorem}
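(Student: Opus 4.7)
The plan is to realize $\tau$ as an automorphism of the auxiliary graph $\mathcal{G}_{\square}$, apply the Isometric Embedding Lemma to obtain an isometry $\tilde{\tau}$ of $\mathbb{S}^2$, and then show that $\tilde{\tau}$ is \emph{fixed-point free}. Since the only fixed-point-free involutive isometry of $\mathbb{S}^2$ is the antipodal map, this will yield $\tilde{\tau} = \alpha$.

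First I would extend $\tau$ to an involution of $V(\mathcal{G}_{\square}) = V(\mathcal{G}) \cup E(\mathcal{G}) \cup F(\mathcal{G})$. Property (1) makes $\tau$ an involution on $V(\mathcal{G}) \cup F(\mathcal{G})$, and the assignment $\tau(ab) = (xy)$ with $\tau(a)\cap \tau(b) = (xy)$ extends it involutively to $E(\mathcal{G})$. The incidences $v \in e$ and $e \in f$ that define the edges of $\mathcal{G}_{\square}$ are preserved because $\tau$ is a duality isomorphism between $\mathcal{P}$ and $\mathcal{P}^{*}$. Thus $\tau$ is an involutive automorphism of $\mathcal{G}_{\square}$, and the Isometric Embedding Lemma provides an isometry $\tilde{\tau}$ of $\mathbb{S}^2$ realizing it, which (by uniqueness of the extension from three non-collinear points) is itself an involution.

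Next I would rule out fixed points of $\tilde{\tau}$ stratum by stratum on the CW-decomposition of $\mathbb{S}^2$ induced by $\mathcal{G}_{\square}$. A fixed vertex of $\mathcal{G}_{\square}$ cannot lie in $V \cup F$ because $\tilde{\tau}$ swaps these two classes, so it would have to be an edge $(ab)\in E$ with $\tau(ab) = (ab)$; unpacking the definition forces $(ab)$ to be a common edge of the faces $\tau(a)$ and $\tau(b)$, whence $a \in \tau(a)$, contradicting property (2). A fixed point in the interior of an edge of $\mathcal{G}_{\square}$ would force $\tilde{\tau}$ either to fix the entire edge (hence both endpoints, already impossible) or to reverse it, swapping its endpoints; but each edge of $\mathcal{G}_{\square}$ joins a vertex of $\mathcal{G}$ with an edge of $\mathcal{G}$, or an edge with a face, and $\tilde{\tau}$ never mixes these classes. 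Finally, a fixed point in the interior of a quadrilateral face $(v,a,f,b)$ would force $\tilde{\tau}$ to map that quad to itself; since $\tilde{\tau}$ swaps the $V$-label and the $F$-label, the only possibility is the diagonal swap $\tilde{\tau}(v) = f$, but $v$ is by construction a vertex of the face $f$ of $\mathcal{P}$, so $\tau(v) = f$ gives $v \in \tau(v)$, once more violating property (2).

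The step I expect to require the most care is the face case: one must convince oneself that the bipartite-type constraint $V \leftrightarrow F$ genuinely eliminates all involutive symmetries of a $\mathcal{G}_{\square}$-quadrilateral except the diagonal swap, independently of the actual shape of the quadrilateral. Once the three strata are exhausted, the classification of involutive isometries of $\mathbb{S}^2$ (identity, reflections across great circles, $\pi$-rotations, and $\alpha$, of which only $\alpha$ is fixed-point free) delivers the theorem.
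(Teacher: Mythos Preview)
Your argument is correct and follows the same strategy as the paper: realize $\tau$ as an involutive automorphism of $\mathcal{G}_\square$, invoke the Isometric Embedding Lemma, and show that any fixed point of $\tilde{\tau}$ would force $v\in\tau(v)$ for some vertex $v$, contradicting condition~(2). The only difference is organizational---the paper splits into the reflection/rotation cases and then locates a quadrilateral meeting the fixed locus, whereas you split according to the CW-stratum (vertex, edge-interior, face-interior of $\mathcal{G}_\square$) containing the putative fixed point; both routes land on the same contradiction $\tau(v)=f$ with $v\in f$.
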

\begin{proof} First we can observe that $\tau$ is an automorphism of $\mathcal{G}_{\square}$ and condition (1) of strong involution implies $\tau ^2=id$. Therefore, $\tilde{\tau}$ (given in Lemma 8) must be an involution as isometry. There are three possible involutive isometries of the sphere: a reflection through a line (a spherical line), a rotation by $\frac{\pi}{2}$ and the antipodal mapping (a good reference is \cite{geometry}). Only the antipodal mapping has no fixed points, so we will show that $\tilde{\tau}$ cannot have fixed points. We will proceed by contradiction, supposing $\tilde{\tau}$ has a fixed point and then we will conclude there exists a vertex $v$ such that $v\in \tau (v)$.
\\
If $\tau$ is a reflection through a plane $H$, let us consider $v\in V(\mathcal{G})$, $a,b\in E(\mathcal{G})$ and $f\in F(\mathcal{G})$ such that $H$ intersects the quadrilateral $\mathcal{Q} = (vafb)$ in its interior. The only points of the edges of quadrilateral $\mathcal{Q}$ can intersect $H$ are $a$ and $b$, so $H\cap \mathbb{S}^2 = l$ where $l$ is the spherical line through $a$ and $b$, thus we must have $\tau (v)=f$ that means $v\in \tau (v)$.
\\
If $\tau$ is a rotation in a line $PP'$ ($P,P'$ antipodal points on the sphere), let $\mathcal{Q} = (vafb)$ be a quadrilateral containing $P$. If $P$ is the center (the barycenter) of the quadrilateral, then since $\tau$ is a duality, it must send $v$ into $f$, but then $\tau (v)=f$, which means $v\in \tau (v)$. If $P$ is $a$ or $b$, say $P=a$, then the edge $(va)$ is sent to an edge $(af')$ where $f'$ is a face of $\mathcal{G}$, distinct from $f$ and containing $v$, but then the quadrilateral $\mathcal{Q'}$ corresponding to $v$ and $f'$ we have $\tau (v) = f'$, which means $v\in \tau (v)$. This concludes the proof.  
\end{proof}
 
 As a consequence of Theorem 9 we obtain the following.
 
\begin{corollary} For a strongly involutive self-dual polyhedron there is only one duality which is a strong involution.
\end{corollary}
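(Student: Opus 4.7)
The plan is to combine Theorem 9 with the equivariance of the embedding in Lemma 8. Suppose $\tau_1$ and $\tau_2$ are two strong involutions of $\mathcal{P}$. Theorem 9 applied to each of them identifies the associated isometries of $\mathbb{S}^2$ with the antipodal map:
$$\tilde{\tau}_1 \;=\; \tilde{\tau}_2 \;=\; \alpha.$$

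The key step is to examine the composition $\sigma := \tau_1 \circ \tau_2$. Each $\tau_i$ permutes the vertices of $\mathcal{G}_{\square}$ by swapping the class $V(\mathcal{G})$ with the class $F(\mathcal{G})$ and permuting $E(\mathcal{G})$; the composition $\sigma$ therefore preserves each of the three classes of vertices of $\mathcal{G}_{\square}$ separately, so it is an automorphism of $\mathcal{G}_{\square}$ coming from an honest graph automorphism of $\mathcal{G}$. By the naturality of the correspondence $\sigma\mapsto \tilde\sigma$ supplied by Lemma 8, one gets
$$\tilde{\sigma} \;=\; \tilde{\tau}_1 \circ \tilde{\tau}_2 \;=\; \alpha \circ \alpha \;=\; \mathrm{id}_{\mathbb{S}^2}.$$

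The equivariance $\tilde\sigma \circ i = i \circ \sigma$ then forces $\sigma$ to fix every vertex of $\mathcal{G}_{\square}$, so $\sigma = \mathrm{id}$. Since $\tau_2$ is an involution, this yields $\tau_1 = \tau_2^{-1} = \tau_2$, which is the desired uniqueness.

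The only point I would double-check is that the assignment $\sigma \mapsto \tilde\sigma$ in Lemma 8 is a group homomorphism, so that $\widetilde{\tau_1\circ \tau_2} = \tilde\tau_1\circ\tilde\tau_2$; but this is immediate from uniqueness: both $\tilde\tau_1\circ\tilde\tau_2$ and $\widetilde{\tau_1\circ\tau_2}$ intertwine $\tau_1\circ\tau_2$ with $i$, so they agree on the finite set $i(V(\mathcal{G}_\square))$, and an isometry of $\mathbb{S}^2$ is determined by its action on any non-degenerate finite configuration of points. No substantive obstacle is therefore expected; the corollary is a short formal consequence of the antipodal rigidity established in Theorem 9.
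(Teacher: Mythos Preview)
Your argument is correct and is exactly the deduction the paper has in mind: both strong involutions lift, via the isometric embedding of $\mathcal{G}_\square$ from Lemma~8, to the antipodal map by Theorem~9, and this forces them to coincide. The detour through $\sigma=\tau_1\circ\tau_2$ and the homomorphism check are not actually needed: from $\tilde\tau_1=\tilde\tau_2=\alpha$ and the equivariance $\tilde\tau_j\circ i=i\circ\tau_j$ you get $i\circ\tau_1=i\circ\tau_2$ directly, and injectivity of $i$ finishes it.
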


\end{document}